\newcommand\R{{\mathbb{R}}}
\newcommand\N{{\mathbb{N}}}
\newcommand\Z{{\mathbb{Z}}}
\newtheorem{theorem}{Theorem}[section]
\newtheorem{lemma}[theorem]{Lemma}
\theoremstyle{definition}
\theoremstyle{remark}
\numberwithin{equation}{section}
\begin{document}
\title[non-linear heat equation]
{Blowup for the nonlinear heat equation with small initial data in scale-invariant Besov norms}
\author{Lorenzo Brandolese}
\address{L. Brandolese: Universit\'e de Lyon, Universit\'e Claude Bernard Lyon 1, Institut Camille Jordan, UMR CNRS,  43 bd. du 11 Novembre 1918, 69622 Villeurbanne Cedex.}
\author{Fernando Cortez}
\address{F. Cortez: Escuela Polit\'ecnica Nacional del Ecuador, Facultad de Ciencias, Departemento de Matem\'atica,
Ladr\'on de Guevara E11-253, Quito-Ecuador.}
\date{\today}
\subjclass[2010]{35K55 (primary), 30H25 (secondary)}
\keywords{Blowup, Nonlinear Heat Equation, Besov}

\begin{abstract}
We consider the Cauchy problem of the nonlinear heat equation  $u_t -\Delta u= u^{b},\ u(0,x)=u_0$, with $b\geq 2$ and $b\in \mathbb{N}$. 
We prove that  initial data $u_0\in \mathcal{S}(\mathbb{R}^{n})$ (the Schwartz class)
arbitrarily small in the scale invariant Besov-norm
$\dot B^{-2/b}_{n(b-1) b/2,q}(\mathbb{R}^{n})$, can produce solutions that blow up in finite time. The case $b=3$ answers a question raised by Yves Meyer.
Our result also proves that the smallness assumption put in an earlier work by C.~Miao, B.~Yuan and B.~Zhang, for the global-in-time solvability, is essentially optimal.
\end{abstract}

\maketitle

\section{Introduction}
In this paper we study the Cauchy problem for the nonlinear heat equation
\begin{equation}
\label{Ter}
\begin{cases}
\partial_t u=
\Delta u  +  |u|^{\alpha} u, &    x\in \mathbb{R}^{n}, \quad t\in[0,T]\\ 
u(0,x) = u_0(x),
\end{cases}
\end{equation}
where $\alpha>0$, $0<T\leq\infty$, and $u\colon\R^{+}\times\R^{n} \to \R$ is a real function.
This problem attracted a considerable interest and we refer to, \emph{e.g.}, 
\cites{Ball, ColMR17, Ebd, Frie, Frie1, Grois, Henr, HerV94, MatM09, Suz, Weis} for a small sample of the huge existing literature.

Several well-posedness results are available for the Cauchy problem~\eqref{Ter}.
For example, if $u_0\in C_0(\R^n)$, then there is $T=T(u_0)>0$ and a unique
$u\in C([0,T),C_0(\R^n))$ which is a classical solution to~\eqref{Ter} on $(0,T)\times \R^n$.
For more singular data, say $u_0\in L^p(\R^n)$, we know the following, see  \cites{BreCaz,Weis1,Weis}.
\begin{enumerate}
\item[-]  When $p > \frac{n\alpha}{2}$ and $p\ge1$, there exists a constant $T=T(u_0)>0$ and a unique function $u(t)\in C([0,T],L^{p}(\mathbb{R}^{n}))$ that is a classical solution to~\eqref{Ter} on~$(0,T)\times \R^n$.
\item[-]  When $p < \frac{n\alpha}{2}$, there is no general theory of existence. Besides, A. Haraux and F. Weissler \cite{Haraux} established the non-uniqueness, by showing that there is a positive solution in $C([0,T],L^{p}(\mathbb{R}^{n})) \cap L^{\infty}_{loc}((0,T),L^{\infty}(\R^n))$, arising from zero initial data.
\item[-]  When $p = \frac{n\alpha}{2}$, see Theorem~\ref{localexis} below.
\end{enumerate}

We will be interested in the issues of the blowup in finite time \emph{v.s.} the global existence of the solutions.
The first works in this direction are due to H.~Fujita. 
Fujita proved that for the positive solutions of \eqref{Ter}, if the initial data $u_0$ is of class $C^{2}(\mathbb{R}^{n})$ with derivatives up to the second order bounded on $\mathbb{R}^{n}$, then a necessary condition for $u$ to be unique in $C(\mathbb{R}^{n}\times [0,T))$ is that
\begin{eqnarray*}
 \forall\,  x\in \mathbb{R}^{n}, \quad |u_0(x)| \leq M \ e^{|x|^\beta},
\end{eqnarray*}
for some constants $M>0$ and $0<\beta<2$.
See \cites{Fuji,Fuji1}.

About the problem of the existence of regular global solutions, there are two possible scenarios:
if  $n\alpha/2 < 1$, then no nontrivial positive solution of this problem can  be global (a situation now referred as \emph{Fujita's phenomenon}), while for $n\alpha/2 > 1$, there are global non-trivial positive solutions under small initial data assumptions.
K. Hayakawa \cite{Hay} and F. Weissler \cites{Weis1,Weis} later proved that Fujita's phenomenon occurs in the case of the critical exponent $n\alpha/2 =1$.

\section{Motivations and overview of the main result}

To motivate our results, we introduce the concept of a scale-invariant space. 
For $\lambda>0$, let us set 
\begin{eqnarray}
\label{scaling}
u_\lambda(t,x)= \lambda^{\frac{2}{\alpha}} u(\lambda^{2}t,\lambda x)  \qquad \mbox{and} \qquad u_{0,{\lambda}}(x)= \lambda^{\frac{2}{\alpha}} u_0(\lambda x).
\end{eqnarray}
For every solution $u(t,x)$ of \eqref{Ter}, $u_\lambda(t,x)$ is also a solution of  \eqref{Ter} with initial data $u_{0,{\lambda}}(x)$. 
In this setting, we say that a  Banach space $E$ is \emph{scale-invariant}, if
\begin{eqnarray}
\label{invsca}
\left \| u(t,\cdot) \right \|_{E}= \left \| u_\lambda(t,\cdot) \right \|_{E}. 
\end{eqnarray}  
Scale-invariant space are known to play an essential role in issues like well-posedness, global existence or blow-up of the solution. 

The purpose of the present paper is to study the borderline cases of explosion and global existence for solutions of~\eqref{Ter}, in a scale-invariant Banach space.
In the case of problem \eqref{Ter}, the only $L^p(\R^n)$-space invariant under the above scaling \eqref{scaling} is obtained for $p=n\alpha/2$. Notice that $p\geq 1$ if and only if $\alpha$ is larger or equal to the Fujita critical exponent. Therefore, we will be  especially interested
in solutions in  $L^{n\alpha/2}(\mathbb{R}^{n})$.

Our starting point is the following theorem, where we collect some of the results of Brezis, Cazenave and Weissler, in this scaling invariant setting.
\begin{theorem}[See~\cite{Weis1}. See also~\cite{BreCaz} for the uniqueness]
\label{localexis}
Let $u_0\in  L^{n\alpha/2}(\mathbb{R}^{n})$,
and assume that $\mbox{$n\alpha/2>1$}$.
There exists a time $T=T(u_0)>0$ and a function 
$u\in C([0,T],L^{n\alpha/2}(\mathbb{R}^{n})) \cap L^{\infty}_{loc}((0,T], L^{\infty}(\mathbb{R}^{n}))$
such that~$u$ is a classical solution of~\eqref{Ter} on $(0,T)\times\R^n$.
Moreover,
\begin{itemize}
\item [(i)] $\sup_{0<t<T}\, t^{\sigma/2}  \left \| u(\cdot,t) \right \|_{p}  < +\infty$,
\item [(ii)]  $\lim_{t\to0} \,t^{\sigma/2}  \left \| u(\cdot,t) \right \|_{p}=0$,
\end{itemize}
for any $\frac{n\alpha}{2}<p< \frac{n\alpha(\alpha+1)}{2}$ and $\sigma=\frac{2}{\alpha}-\frac{n}{p}$.

The uniqueness of classical solutions to~\eqref{Ter} holds in the class $C([0,T],L^{n\alpha/2}(\R^n))$.

Moreover, there exists $\delta=\delta(\alpha,n)$ such that if $\|u_0\|_{n\alpha/2}<\delta$ then 
such solution is global, \emph{i.e.}, one can take $T$ arbitrarily large.
\end{theorem}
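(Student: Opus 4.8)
The plan is to derive all four assertions from the Fujita--Kato fixed point scheme, in a Kato-type space carrying a temporal weight, in the form developed by Weissler and Brezis--Cazenave. Set $q=n\alpha/2$ and rewrite \eqref{Ter} in the mild form
\begin{equation}
\label{mild}
u(t)=e^{t\Delta}u_0+\int_0^t e^{(t-s)\Delta}\bigl(|u(s)|^{\alpha}u(s)\bigr)\dds=:\Phi(u)(t).
\end{equation}
Fix an auxiliary exponent $p$ with $q<p<\frac{n\alpha(\alpha+1)}{2}$, put $\sigma=\frac2\alpha-\frac np>0$ and $r=\frac p{\alpha+1}$. Using $\frac1q=\frac2{n\alpha}$ one checks the three identities
\begin{equation}
\label{identities}
\frac n2\Bigl(\frac1q-\frac1p\Bigr)=\frac\sigma2,\qquad
\frac n2\Bigl(\frac1r-\frac1q\Bigr)+\frac{\alpha+1}{2}\sigma=1,\qquad
\frac n2\Bigl(\frac1r-\frac1p\Bigr)+\frac{\alpha+1}{2}\sigma=1+\frac\sigma2,
\end{equation}
and the restriction $q<p<\frac{n\alpha(\alpha+1)}{2}$ is exactly what forces every exponent occurring below to lie in $(0,1)$, so that the Beta integrals $\int_0^t(t-s)^{-a}s^{-b}\dds=t^{1-a-b}B(1-a,1-b)$ converge.

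I would solve \eqref{mild} in the complete metric space
\begin{equation*}
E_T=\Bigl\{u\in C\bigl((0,T],L^p(\R^n)\bigr):\ \|u\|_T:=\sup_{0<t<T}t^{\sigma/2}\|u(t)\|_p\le 2\eta(T)\Bigr\},\qquad \eta(T):=\sup_{0<t<T}t^{\sigma/2}\|e^{t\Delta}u_0\|_p.
\end{equation*}
Combining the smoothing estimates $\|e^{t\Delta}f\|_b\le Ct^{-\frac n2(\frac1a-\frac1b)}\|f\|_a$ $(1\le a\le b\le\infty)$, the H\"older bound $\||u|^\alpha u\|_r\le\|u\|_p^{\alpha+1}$, the pointwise inequality $\bigl||u|^\alpha u-|v|^\alpha v\bigr|\le C(|u|^\alpha+|v|^\alpha)|u-v|$ and the last two identities in \eqref{identities}, one gets
\begin{equation*}
\|\Phi(u)\|_T\le\eta(T)+C\|u\|_T^{\alpha+1},\qquad
\|\Phi(u)-\Phi(v)\|_T\le C\bigl(\|u\|_T^{\alpha}+\|v\|_T^{\alpha}\bigr)\|u-v\|_T.
\end{equation*}
By the first identity in \eqref{identities}, $\eta(T)\le C\|u_0\|_q$ \emph{uniformly in $T$}, while $t^{\sigma/2}\|e^{t\Delta}\varphi\|_p\le t^{\sigma/2}\|\varphi\|_p\to0$ for $\varphi\in C_c^{\infty}(\R^n)$; a density argument in $L^q(\R^n)$ then gives $\eta(T)\to0$ as $T\to0$. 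Hence for $T$ small enough $\Phi$ is a $\tfrac12$-contraction of $E_T$, and its fixed point $u$ is the desired solution. If moreover $\|u_0\|_q<\delta$ with $\delta=\delta(\alpha,n)$ chosen so that $2C(2C\delta)^{\alpha}\le\tfrac12$, then $E_\infty$ is $\Phi$-stable and contracting, so $u$ is global; this is the last assertion.

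To finish, the regularity of $u$ must be upgraded. From \eqref{mild} and the same estimates one gets $\sup_{0<t<T}\|u(t)\|_q\le\|u_0\|_q+C\|u\|_T^{\alpha+1}<\infty$ and $t\mapsto u(t)\in L^q$ continuous on $(0,T]$; continuity at $t=0$ follows because $\|e^{t\Delta}u_0-u_0\|_q\to0$ and, writing $g(t)=\sup_{0<s<t}s^{\sigma/2}\|u(s)\|_p$, the fixed point relation yields $g(t)\le\eta(t)+Cg(t)^{\alpha+1}\le 2\eta(t)\to0$; the same bound gives (ii), and (i) is $g(T)<\infty$. Properties (i)--(ii) for \emph{every} admissible $p$ then follow a posteriori by inserting the already-controlled weighted $L^{p_0}$-norm of $u$ into the Beta-integral estimate with $p$ as the target exponent. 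For the $L^\infty_{loc}((0,T],L^\infty)$ smoothing one bootstraps: from $u(t_0)\in L^p$ $(t_0>0)$ the Duhamel formula on $[t_0,\cdot)$ raises the Lebesgue exponent by a fixed amount at each step, reaching $L^\infty$ after finitely many steps, and standard interior parabolic regularity then makes $u$ classical on $(0,T)\times\R^n$. Finally, uniqueness in the full class $C([0,T],L^q(\R^n))$ — without any size restriction — does not follow from the contraction, which only controls $E_T$; one must argue as Brezis--Cazenave and first prove that \emph{any} solution $v\in C([0,T],L^q)$ of \eqref{mild} automatically satisfies $t^{\sigma/2}\|v(t)\|_p\to0$ as $t\to0$. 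I expect this to be the main obstacle: the heat smoothing from $L^q$ into $L^p$ with $p>q$ is not time-integrable at $s=0$, so $\|v(t)\|_p$ cannot be bounded directly from $v\in C([0,T],L^q)$; instead one runs a continuity/maximality argument on $s\mapsto s^{\sigma/2}\|v(s)\|_p$, exploiting that $v(t)\to u_0$ in $L^q$. Once this is known, $v\in E_\tau$ for some $\tau>0$, hence $v\equiv u$ on $[0,\tau]$, and a connectedness argument in $t$ propagates the identity to all of $[0,T]$.
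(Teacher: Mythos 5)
This theorem is not proved in the paper: it is imported verbatim from Weissler \cite{Weis1} and Brezis--Cazenave \cite{BreCaz}, and the closest thing to a proof in the text is the sketch in the Preliminaries, where the same Kato-type fixed point is run in the norm $\|u\|_X=\|u\|_Y+\|u\|_Z$ with $\|u\|_Z=\sup_{0<t<T}t^{1/(b-1)-n/(2p)}\|u(t)\|_p$. Your proposal follows exactly that scheme (your $\sigma/2$ is the paper's $1/(b-1)-n/(2p)$ with $\alpha=b-1$, your three Beta-integral identities are the exponent bookkeeping behind the paper's two displayed nonlinear estimates), so in approach you coincide with the source. Two caveats. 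First, the H\"older step $\||u|^\alpha u\|_r\le\|u\|_p^{\alpha+1}$ with $r=p/(\alpha+1)$ silently requires $r\ge1$, i.e.\ $p\ge\alpha+1$, which is not implied by $p>n\alpha/2$; the paper flags this explicitly (``one also needs here $1<b\le p$, but the restriction $b\le p$ can be dropped after the solution is constructed, by interpolation''), and you should add the same remark, since otherwise statements (i)--(ii) for $p$ near the lower end of the admissible range are not reached by your argument. Second, the unconditional uniqueness in $C([0,T],L^{n\alpha/2})$ is the genuinely hard part of the theorem, and you only gesture at it (``I expect this to be the main obstacle''); you correctly identify the obstruction (a solution merely in $C([0,T],L^q)$ gives no a priori control of $t^{\sigma/2}\|v(t)\|_p$, so one must prove this decay rather than assume it), which is precisely the content of the Brezis--Cazenave argument the paper cites, and note that for \emph{weak} solutions this step actually requires the extra restriction $\frac n2\alpha>\alpha+1$ recorded in the paper's discussion. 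As a reconstruction of the cited result your outline is sound; as a self-contained proof it is complete for existence, smoothing, (i)--(ii) and the small-data global statement, but not for uniqueness.
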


The solution of Theorem~\ref{localexis} satisfies the integral equation
\begin{equation}
 \label{duhamel-alpha}
   u(t)= e^{t \Delta} u_0(x) +  \displaystyle\int_{0}^{t} e^{(t-s)\Delta}  \left | u \right |^{\alpha} u(s) \, ds, 
  \end{equation}
where $e^{t\Delta}f = G_t *f$ and
\[
G_t(x)= (4\pi t)^{-n/2}  \ e^{-|x|^{2}/(4t)}
\] 
is the standard Gaussian.
The uniqueness of~\emph{weak solutions} to the integral equation~\eqref{duhamel-alpha}
have been also addressed in~\cite{BreCaz}. The authors show that 
there is at most one \emph{weak solution} to~\eqref{duhamel} in the class
$C([0,T],L^{n\alpha/2}(\mathbb{R}^{n}))\cap L^{\infty}_{loc}((0,T], L^{\infty}(\mathbb{R}^{n}))$,
provided $\frac{n}{2}\alpha\ge \alpha+1>1$.
Under the additional restriction $\frac{n}{2}\alpha> \alpha+1>1$ the uniqueness of weak solutions
to~\eqref{duhamel} holds in the larger class $C([0,T],L^{n\alpha/2}(\mathbb{R}^{n}))$.
We refer to E.~Terraneo's paper \cite{Ter} for further uniqueness/non uniqueness results of weak solutions.

The problem of obtaining \emph{global-in-time} solutions by relaxing the stringent smallness assumption 
$\|u_0\|_{n\alpha/2}<\!\!\!<1$ was also addressed.
New ideas in this direction were brought by M.~Cannone and Y.~Meyer's works on the
Navier--Stokes equations~\cites{Canno1,Meyer}. 

In the model case $\alpha=2$ and $n=3$, \emph{i.e.} for the cubic heat equation in $\R^3$,
\[\partial_t u=\Delta u+u^3,\] 
Y.~Meyer observed in his lecture notes~\cite{Meyer} that if $u_0\in L^3(\R^3)$, with 
\[
\|u_0\|_{\dot B^{-1/2}_{6,\infty}}<\!\!\!<1,
\]
(this condition is considerably weaker than requiring $\|u_0\|_3<\!\!\!<1$)
then the maximal time $T^*$ of the solution is $T^*=+\infty$.
In fact, the method described therein would go through provided 
$\|u_0\|_{\dot B^{-1+3/p}_{p,\infty}}<\!\!\!<1$ and $3<p<9$.
In~\cite{Meyer}, he also raised the question whether or not, for $u_0\in L^3(\R^3)$, the even weaker smallness condition
\[
\|u_0\|_{\dot B^{-1}_{\infty,\infty}}<\!\!\!<1
\]
would still imply $T^*=+\infty$.
See next section for the definition of Besov spaces.
Notice that these Besov spaces enjoy the same scaling invariance properties as $L^3(\R^3)$ and we have the 
continuous injections
$
L^3(\R^3)\subset \dot B^{-1+3/p}_{p,\infty}(\R^3) \subset \dot B^{-1}_{\infty,\infty}(\R^3)
$
$(3<p\le+\infty)$.
Moreover, $\dot B^{-1}_{\infty,\infty}$ is known to be the largest function space invariant under translation and satisfying such scaling property. In this sense, a smallness condition on the $\dot B^{-1}_{\infty,\infty}$-norm would be the least demanding restriction that one could put in a scale-invariant setting. 

In the same spirit, but for the general case of problem~\eqref{Ter}, 
the best result for the global-in-time existence are due to Miao, Yuan, and Zhang~\cite{Mia}.
They proved (among other things) that the solution of Theorem~\ref{localexis} is global, provided $u_0\in L^{n\alpha/2}(\R^n)$, with $n\alpha/2>1$, 
under the smallness condition
\[
\|u_0\|_{\dot B^{-2/\alpha+n/p}_{p,q}}<\!\!\!<1,
\textstyle\qquad\text{for some $1<\frac{n\alpha}{2}<p<\frac{n\alpha(\alpha+1)}{2}$}, \quad 1\le q\le\infty.
\]
The restriction $\frac{n\alpha}{2}<p$, together with the condition $q\ge p$, ensure the embedding of 
$L^{n\alpha/2}(\R^n)$ into $\dot B^{-2/\alpha+n/p}_{p,q}(\R^n)$.
On the other hand the authors of~\cite{Mia} left open the limit case $p=n\alpha(\alpha+1)/2$.
In other words, they left open 
the question whether or not initial data $u_0\in L^{n\alpha/2}(\R^n)$, small in the $\dot B^{-2/(\alpha+1)}_{n\alpha(\alpha+1)/2,q}$-norm, give rise to global-in-time solutions.

Our main result below  provides a negative answer to the above problem, thus settling the borderline problem of the global solvability of~\eqref{class}, at least in the case of integer nonlinearity exponents.

More specifically, for $b\in\N$, we consider the Cauchy problem for the non-linear heat  equation  
\begin{equation}
\label{class}
\begin{cases}
\partial_t u  = \Delta u  + u^{b}\\
u(0,x) = u_0(x)
\end{cases} 
\qquad x\in \mathbb{R}^{n},  t\in(0,T). 
\end{equation} 
where  $0< T \leq \infty$.
The results recalled for the problem~\eqref{Ter} ---in particular Theorem~\ref{localexis}---
remain valid for~\eqref{class}, with $b=\alpha+1$. These two Cauchy problems in fact agree for positive solutions, or for real solutions of any sign, when $b$ is an odd integer.

\begin{theorem}
\label{inflation}
For any $\delta>0$ and $b< q \leq +\infty$, with $b\in \N$ and $n(b-1)/2>1$,
there exists
$u_0\in \mathcal{S}(\mathbb{R}^{n})$  (the Schwartz class)
such that
\begin{equation}
\label{u0small}
 \left \| u_{0} \right \|_{\dot{B}^{-2/b}_{nb(b-1)/2,q}} \leq \delta, 
\end{equation} 
and such that the maximal time~$T^*$ of the solution $u\in C([0,T^{*}),L^{n(b-1)/2}(\mathbb{R}^{n}))$ to~\eqref{class}  
arising from $u_0$ is finite.
\end{theorem}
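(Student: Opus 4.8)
The plan is to construct $u_0$ explicitly as a superposition of widely separated, highly oscillatory bumps, so that it is small in the Besov norm $\dot B^{-2/b}_{nb(b-1)/2,q}$ while forcing blowup. The key heuristic is that the exponent $nb(b-1)/2$ is precisely the critical $p$ left open by Miao--Yuan--Zhang, and a Besov space $\dot B^{s}_{p,q}$ with $q$ large (in particular $q>b$) is \emph{strictly} larger than $L^{n(b-1)/2}$, so there is room to place mass far from the origin at many scales with only a slow ($\ell^q$) growth of the norm. First I would recall the Duhamel/comparison framework: since we are free to take $u_0\ge 0$ (and $b\in\N$, so $u^b=|u|^{b-1}u$ when $b$ is odd, while for $b$ even positivity is preserved by the equation), the solution dominates the solution of the ODE-type lower bound obtained by dropping the Laplacian, or more precisely one uses the well-known fact that if $\int G_t * u_0 \,$ stays large on a set of controlled measure for a controlled time, then $\|u(t)\|_{n(b-1)/2}$ cannot remain finite up to $t=\infty$. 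Concretely, I would invoke a blowup criterion of the form: if the solution were global, then by Theorem~\ref{localexis}(i) applied at large times, or by the smoothing estimates, $\|e^{t\Delta}u_0\|_p$ would have to decay at the self-similar rate for all $t$; exhibiting $u_0$ for which $\|e^{t\Delta}u_0\|_{n(b-1)/2}$ (or a suitable $L^p$ norm) fails to decay, or stays bounded below, contradicts global existence.

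The core construction: fix a smooth nonnegative bump $\varphi$ supported in the unit ball with $\varphi\equiv 1$ on $B_{1/2}$. Choose scales $\lambda_j\to 0$ (or $\to\infty$) and centers $x_j$ with $|x_j|$ growing fast enough that the bumps $\varphi((x-x_j)/\lambda_j)$ have disjoint, well-separated supports, and set
\begin{equation}
\label{construction}
u_0(x) = \sum_{j=1}^{N} a_j\, \lambda_j^{-2/(b-1)}\, \varphi\!\left(\frac{x-x_j}{\lambda_j}\right),
\end{equation}
with amplitudes $a_j>0$ and $N$ (possibly infinite) to be chosen. The factor $\lambda_j^{-2/(b-1)}$ is the scale-invariant normalization matching \eqref{scaling} with $\alpha=b-1$. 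Because the supports are disjoint and separated, the Besov norm decouples: by the characterization of $\dot B^{s}_{p,q}$ via dyadic blocks (or via the heat-semigroup: $\|f\|_{\dot B^{s}_{p,q}}\sim \|\,t^{-s/2}\|e^{t\Delta}f\|_p\,\|_{L^q(dt/t)}$ for $s<0$), each bump contributes, up to a universal constant, $a_j$ times a fixed number depending only on $\varphi$, $b$, $n$; summing in $\ell^q$ gives $\|u_0\|_{\dot B^{-2/b}_{nb(b-1)/2,q}}\lesssim \|(a_j)\|_{\ell^q}$. So choosing $a_j$ with $\|(a_j)\|_{\ell^q}\le\delta$ (e.g. $a_j = c\,\delta\, j^{-2/q}$ if $q<\infty$, or $a_j\equiv c\delta$ if $q=\infty$) secures \eqref{u0small}. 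One then checks $u_0\in\mathcal S(\R^n)$ — immediate if $N<\infty$; for infinite $N$ one takes $|x_j|$ growing fast enough that the Schwartz seminorms converge.

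For the blowup, I would argue locally near each bump. On the support of the $j$-th bump, $u_0 \ge a_j\lambda_j^{-2/(b-1)}$ on a ball of radius $\lambda_j/2$; by scaling \eqref{scaling} the solution near $x_j$ on the time interval $[0,\lambda_j^2 T_*(a_j)]$ looks like $a_j\lambda_j^{-2/(b-1)}$ times the (rescaled) solution with data $a_j\varphi$, which blows up in finite time $T_*(a_j)$ provided $a_j$ exceeds a fixed threshold depending only on $\varphi,b,n$ — a standard fact (test against the first Dirichlet eigenfunction of the ball, or use Fujita/Weissler, or Kaplan's method: multiply by a positive eigenfunction and get a Riccati ODE $y'\ge c\,y^b$ for $y(t)=\int u(t)\psi$). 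Thus it suffices to pick the \emph{first} amplitude $a_1$ large — but $a_1\le \delta$! Here is the resolution and the main obstacle: one cannot make a single bump both small in norm and supercritical, so instead one exploits that the blowup threshold on a ball of radius $R$ in the \emph{unnormalized} problem scales, so by taking \emph{many} bumps that cooperate, or by taking the separation small enough that a single cluster of bumps acts like one large bump on a larger ball, one lowers the effective threshold. More robustly: the genuinely scale-invariant statement is that data of \emph{fixed profile and fixed (small) amplitude} still blow up once the \emph{support is large enough in physical space with the right normalization} — i.e. one should scale so $\lambda_j$ is \emph{large}, making $\lambda_j^{-2/(b-1)}$ small pointwise but the spatial extent huge, and blowup on a ball of radius $\lambda_j/2$ occurs in time $O(\lambda_j^2)\to\infty$ yet still finite for each fixed $j$; choosing the amplitudes $a_j$ merely bounded below by the (scale-invariant!) threshold, which is \emph{independent} of $\lambda_j$, is compatible with $\|(a_j)\|_{\ell^q}\le\delta$ only if... it is not, for $q=\infty$.

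Therefore the real mechanism — and the step I expect to be the crux — must be an \emph{instantaneous norm inflation} in physical space rather than a clean comparison: one shows that $e^{t\Delta}u_0$, evaluated at a cleverly chosen time $t_j\sim\lambda_j^2$ and point near $x_j$, is bounded \emph{below} by a quantity that, fed into the Duhamel iteration, produces a nonlinear term already too large — using that the $\dot B^{-2/b}$-norm being small does \emph{not} prevent $\|e^{t\Delta}u_0\|_{L^{n(b-1)/2}}$ or even $\|e^{t\Delta}u_0\|_{L^\infty}$ from being large at some intermediate time, because the embedding $\dot B^{-2/b}_{nb(b-1)/2,q}\hookrightarrow \dot B^{-?}_{\infty,\infty}$ loses the needed control exactly at this critical exponent. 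Concretely, I would: (1) show $\sup_t\|e^{t\Delta}u_0\|_{n(b-1)/2}=+\infty$ while $\|u_0\|_{\dot B^{-2/b}_{nb(b-1)/2,q}}\le\delta$, a linear estimate done bump-by-bump with a logarithmically-divergent sum when $q=\infty$ (this is where $q>b$, more precisely $q>$ the scaling threshold, enters to make the Besov sum converge while the $L^{n(b-1)/2}$-sum diverges); (2) invoke Theorem~\ref{localexis}: if the solution were global, then since $e^{t\Delta}u_0 = u(t)-\int_0^t e^{(t-s)\Delta}u^b\,ds$ and both $u(t)$ and the Duhamel integral would be controlled in $L^{n(b-1)/2}$ uniformly (by (i) and an iteration on the integral term, as in \cite{Mia}), we'd get $\sup_t\|e^{t\Delta}u_0\|_{n(b-1)/2}<\infty$, contradicting (1). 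The technical heart is thus estimate (1) together with the a priori bound on the Duhamel term; the latter is essentially the Miao--Yuan--Zhang estimate run in reverse, and the care needed is to make sure the nonlinear term's $L^{n(b-1)/2}$-norm is genuinely controlled by $\sup_t\|u(t)\|_{n(b-1)/2}$ and the $\dot B^{-2/b}$-norm of the data, closing the contradiction.
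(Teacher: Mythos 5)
There is a genuine gap, and in fact two. First, your physical-space construction \eqref{construction} cannot close, as you yourself observe midway: the $\dot B^{-2/b}_{nb(b-1)/2,q}$-norm of each scale-normalized bump $a_j\lambda_j^{-2/(b-1)}\varphi((\cdot-x_j)/\lambda_j)$ equals $a_j$ times a fixed constant (the norm is exactly scale-invariant), and the amplitude threshold for blowup of a single bump is scale-invariant for the same reason, so requiring $\|(a_j)\|_{\ell^q}\le\delta$ forces every bump to be subcritical; no amount of spatial separation or rescaling changes this. (Moreover, an unmodulated bump is not frequency-localized in a dyadic annulus, so the claimed $\ell^q$-decoupling of the Besov norm across bumps at different scales is itself unjustified.) Second, the fallback scheme rests on a false premise: for $u_0\in\mathcal S(\R^n)\subset L^{n(b-1)/2}(\R^n)$ one always has $\sup_{t>0}\|e^{t\Delta}u_0\|_{n(b-1)/2}\le\|u_0\|_{n(b-1)/2}<\infty$, since the heat semigroup is a contraction on every $L^p$. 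So your step (1) is impossible for admissible data, and the contradiction you propose to extract in step (2) is vacuous. The paper explicitly notes that comparison-principle and eigenfunction (Kaplan) methods are obstructed by the smallness condition \eqref{u0small}, which is precisely the obstruction you ran into.

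The missing idea is to superpose in \emph{frequency} rather than in physical space. The paper takes
$u_{0,N}(x)=\epsilon_N\sum_{k=0}^{N}2^{2k/b}\eta_k\cos(\frac32 2^kx_1)\,w(x)$
with $\widehat w\ge0$ supported in a small ball and $\eta_k=(1+k)^{-1/b}$, so that $(\eta_k)\in\ell^q$ iff $q>b$. Each summand occupies a single dyadic annulus, whence the Besov norm is exactly $\epsilon_N(\sum_k\eta_k^q)^{1/q}\|w\|_{nb(b-1)/2}$, which is small for $q>b$ and $N$ large. The point is that in the $b$-fold convolution $(\widehat u_{0,N})^{*b}$ the ``diagonal'' terms pairing the bumps at $+\frac32 2^ke_1$ and $-\frac32 2^ke_1$ all return to a neighborhood of the origin and add \emph{constructively}, contributing $\epsilon_N^b\sum_k\eta_k^b\to+\infty$ (this divergence versus the convergence of $\sum_k\eta_k^q$ is exactly where $q>b$ enters). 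One Duhamel iteration then shows $\widehat u_N(\delta/2,\cdot)\ge A\,\widehat w^{\,*b}$ with $A$ as large as desired, and a Fourier-side comparison lemma (iterating the Duhamel formula, using $\widehat u\ge0$ inherited from $\widehat u_0\ge0$) shows that such a datum forces $\|\widehat u(t,\cdot)\|_{L^1}=+\infty$, hence $\|u(t,\cdot)\|_{L^\infty}=+\infty$, within time $\delta/2$. Your proposal contains neither the frequency-constructive-interference mechanism nor any workable substitute for the blowup criterion, so it does not establish the theorem.
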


As mentioned in the introduction, for $n(b-1)/2\le1$, because of Fujita's phenomenon, finite time blow up occurs for positive solutions, no matter which norms of the initial data are assumed to be small. 

In the case $b=n=3$, Theorem~\ref{inflation} 
negatively answers Y.~Meyer's question \cite[Conjecture 1]{Meyer}.

There are several blowup results for~\eqref{Ter} based on the maximum principle, energy functionals, concavity methods, or the spectral properties of the Laplacian, etc. See e.g.~\cite{BanB} for a review of these classical methods. 
But none of them seems to be effective to establish Theorem~\ref{inflation}, as the smallness condition~\eqref{u0small} represents a severe obstruction for their applicability.

The proof of Theorem~\ref{inflation} is constructive: suitable initial data are given by~\eqref{constru} below with $N=N(\delta)$ large enough. 
Our approach, inspired by Palais~\cite{Pal88}, rather uses the positivity of the Fourier transform inherited from its initial condition $u_0$. Even though conceptually similar to \cite{Pal88}, our paper is technically completely different 
(for example, we are able to remove the restriction $b\le 1+2/n$ that appears therein).
From the technical point of view, our paper is somehow closer to~\cites{Montg,Lema}, where the authors studied the blowup for different equations, namely 
diffusion problems with \emph{nonlocal quadratic} nonlinearity.
Our method bears also some relation with that of~\cite{RuC15}. However, the blowup result in 
$\mathcal{F}(L^1)$ of~\cite{RuC15} is not put in relation with the size of the data in scale-invariant norms. As such, our blowup result looks more precise, and its proof shorter.

Since the work of Cannone~\cite{Canno1} we know that fast enough oscillations of the initial give rise
to global-in-time smooth solutions for a large class of semilinear dissipative system, and that size conditions on Besov norms with negative regularity represent an effective way to measure such oscillations. The main interest of our result is to illustrate a limitation of this principle, by showing that there are scale invariant Besov norms that turn out to be too weak to be used for this purpose.

%In the more difficult case of the the Navier--Stokes equations, a conclusion weaker but somehow similar to  Theorem~\ref{inflation} 
%was obtained by Bourgain and Pavlovi\'c \cite{Bourg} (see also \cite{Yone}). These authors considered the Cauchy problem for Navier--Stokes with small data in~$\dot B^{-1}_{\infty,\infty}$. 
%While they left open the hard problem of the blowup, they succeded in obtaining a ``norm-%inflation'' phenomenon for the solution after an arbitrarily short time.

In our blowup result, the maximal time $T^*$ can be taken arbitrarily small, 
as one easily checks applying Theorem~1 to rescaled data $u_{0,\lambda}$,
that have the same Besov norm as in~\eqref{u0small}, and existence time 
$T^{*}_\lambda=\lambda^{-2}T^*$.

In the more difficult case of the Navier--Stokes equations, a similar problem
was addressed by Bourgain and Pavlovi\'c \cite{Bourg} (see also \cite{Yone}).
These authors considered the Cauchy problem for Navier--Stokes with small data in~$\dot B^{-1}_{\infty,\infty}$. While they left open the hard problem of the blowup, they succeeded in constructing a solution featuring a ``norm-inflation'' phenomenon in such Besov space,
after an arbitrarily short time.
But it was later realized by O. Sawada~\cite{Saw} that Bourgain's--Pavlovi\'c solution, in fact, \emph{does not blow up} in finite time.

\section{Preliminaries}

Let us recall the definition of the Besov norms and the Littlewood-Paley decomposition: 
let $\psi \in \mathcal{S}(\mathbb{R}^{n})$ such that $\mbox{supp\,}  \widehat{\psi} \subset \{ 3/4 \leq \left | \xi \right | \leq 8/3\}$ and
\begin{align*}
 1=\displaystyle\sum_{j=-\infty}^{\infty}  \widehat{\psi}_j (\xi) \ \ \ (\xi \in \mathbb{R}^{n},\;\xi\not=0),
\end{align*}
where $\psi_j(x)=2^{nj} \psi(2^{j}x)$, $j\in \mathbb{Z}$.
Here and throughout, $\widehat{f}$ denotes the Fourier transform of ~$f$.
The homogeneous Besov spaces $\dot{B}^{s.p}_{q}$ can be defined as follows, at least for $s<n/p$ and
$1\leq p, q \leq \infty$, which will be our case (in this paper we will only deal with the case $s<0$):
\begin{align*}
\dot{B}^{s}_{p,q} 
&= \{ f\in \mathcal{S'}(\R^n)\colon f=\sum_{j\in\Z} (2^{js}\psi_j*f) \text{ in the $\mathcal{S}'(\R^n)$-sense, and}\;
 \left \| f \right \|_{\dot{B}^{s,p}_{q}} <  \infty  \},
\end{align*}
where, for  $1\le q<+\infty$,
\[
\left \| f \right \|_{\dot{B}^{s}_{p,q}} = \left(\displaystyle\sum_{j=-\infty}^{\infty}  \left \| 2^{js} \psi_j * f  \right \|_{p}^{q}\right)^{1/q}
\]
and $\|f\|_{\dot B^s_{p,\infty}}=\sup_{q\in\Z}\|2^{js}\psi_j*f\|_p$.

As mentioned before,
one can obtain in Theorem~\ref{localexis} the global existence of the solution,
dropping the smallness assumption on $\|u_0\|_{n(b-1)/2}$, and putting instead a smallness assumption 
on the $\dot B^{-2/(b-1)+n/p}_{p,q}$-norm of the data, which is weaker than the
$L^{n(b-1)/2}$-norm. 
Let us sketch a proof of this fact, following the arguments of~\cites{Meyer,Mia},
putting in evidence the admissible range for $p$, which is $n(b-1)/2<p<nb(b-1)/2$.

One rewrites Equation \eqref{class} in the equivalent   Duhamel formulation
\begin{equation}
\label{duhamel}
u(t,x) = e^{t \Delta} u_0(x) +  \displaystyle\int_{0}^{t} e^{(t-\tau)\Delta}  u^{b}(\tau,x) \, d\tau
=:\Phi(u)(t,x).	
\end{equation}

If $u_0\in L^{n(b-1)/2}(\R^n)$, then the solution $u\in C([0,T],L^{n(b-1)/2}(\R^n))$ of Theorem~\ref{localexis} (recall that $\alpha=b-1$) is obtained through the contraction
mapping theorem,
as the limit $u=\lim u_l$ of approximate solutions (where $u_1=e^{t\Delta}u_0$ and 
$u_{l+1}=\Phi(u_l)$, for $l=1,2,\ldots$)  in the $X$-norm, where
\[
\begin{split}
\|u\|_X
&:=\sup_{0<t<T}\|u(t)\|_{n(b-1)/2} \,\,+\, \sup_{0<t<T} t^{1/(b-1)-n/(2p)}\|u(t)\|_p\\
&=:\|u\|_Y+\|u\|_Z.
\end{split}
\]
Indeed, first notice that $e^{t\Delta}u_0\in X$ by standard heat kernel estimates.
Next, the key estimates for the nonlinear term
are the following:
\begin{equation}
\begin{split}
\Bigl\|\int_0^t e^{(t-s)\Delta}u^b(s)\,ds\Bigr\|_p
&\le C\int_0^t (t-s)^{-\frac n2(\frac bp-\frac1p)} \|u^b(s)\|_{p/b}\,ds\\
&\le C\|u\|_Z^b\int_0^t (t-s)^{-\frac n2(\frac bp-\frac1p)} s^{-b/(b-1)+nb/(2p)}\,ds\\
&\le C\|u\|_Z^b\,t^{-1/(b-1)+n/(2p)},
\end{split}
\end{equation} 
and
\begin{equation}
\begin{split}
\Bigl\|\int_0^t e^{(t-s)\Delta}u^b(s)\,ds\Bigr\|_{n(b-1)/2}
&\le C\int_0^t (t-s)^{-\frac n2(\frac bp-\frac{2}{n(b-1)})} \|u^b(s)\|_{p/b}\,ds\\
&\le C\|u\|_Z^b.
\end{split}
\end{equation}
These estimates are valid when
$1< n(b-1)/2<p<nb(b-1)/2$ (one also needs here $1<b\le p$, but the restriction $b\le p$ can be dropped after the solution is constructed, by interpolation).
These estimate ensure that
\[
\|\Phi(u)\|_X\le C\|u\|_Z^b.
\]
The Lipschitz estimates
\[
\|\Phi(u)-\Phi(v)\|_X\le C(\|u\|_Z^{b-1}+\|v\|_Z^{b-1})\|u-v\|_{Z},
\]
is established in a similar way.
But $\|u_0\|_{\dot B^{-2/(b-1)+n/p}_{p,\infty}}\simeq \|u_1\|_Z$ owing to the heat kernel characterization 
of Besov spaces (see~\cite{Canno1}).
Hence, starting with $u_0$ small enough in the $\dot B^{-2/(b-1)+n/p}_{p,\infty}$-norm
allow to construct a solution with maximal lifetime $T^*=+\infty$.

Without any smallness assumption, a well known variant~\cites{Weis1, BreCaz} of the above argument still allows to construct a solution $u=\lim u_l$ in the $X$-norm, at least when $T>0$ is small enough. This relies on the observation that the approximate solutions (and hence the solution $u$ itself) 
satisfy the additional condition $\lim_{t\to0} t^{1/(b-1)-n/(2p)}\|u_l(t)\|_p=0$, for all~$l$.

\section{Proof of main theorem}
\label{sec:proof}

We start with a simple general remark about the properties of solutions of Theorem~\ref{localexis},
arising from initial data in the Schwartz class.
In this case, or more in general
when $u_0\in L^1\cap L^{n(b-1)/2}$, the corresponding solution obtained in Theorem~\ref{localexis}
remains in $L^1(\R^n)$ during the whole lifetime of the solution, and $u\in C([0,T],L^1(\R^n))$.
This could be seen applying Gronwall-type estimates, or otherwise with the following argument:
our claim is immediate if $b> n(b-1)/2$.
Indeed, in this case we may take $p=b$ in Theorem~\ref{localexis}, and we have
$\sup_{0<t<T} t^{1/(b-1)-n/(2b)}\|u(t)\|_b<\infty$. So,
\[
\|u(t)\|_1=\|\Phi(u)(t)\|_1\le \|u_0\|_1+C(u_0)\int_0^t s^{-b/(b-1)+n/2}\,ds\le C(u_0,T),
\]
because our condition $n(b-1)/2>1$ ensures that the above integral is finite for all finite $T>0$.
Moreover, the continuity with respect to~$t$ is obvious.
On the other hand, if $n(b-1)/2\ge b$ then
we first observe that
$u^b\in C([0,T],L^{n(b-1)/(2b)})$, next that $e^{t\Delta}u_0\in L^{n(b-1)/(2b)}$ by interpolation,
and from the integral equation $u(t)=\Phi(u)(t)$
we deduce
$u\in C([0,T],L^{n(b-1)/(2b)}\cap L^{n(b-1)/2})$.
If $n(b-1)/(2b^2)\le 1$, then using again the equation $u=\Phi(u)$ we get by interpolation 
that $u^b\in C([0,T],L^1(\R^n))$ and so $u\in C([0,T],L^1(\R^n))$. Otherwise we iterate this argument, until we find $m\in\N$ such that $n(b-1)/(2b^m)\le1$ and we conclude as before.

In the same way, going back to the sequence $(u_l)$ of approximate solutions introduced in
the previous section, one can prove that when
 $u_0\in L^1\cap L^{n(b-1)/2}$ not only the convergence $u_l\to u$ holds in the $X$-norm, but also $u_l\to u$ in the $C([0,T],L^1)$-norm, as $l\to\infty$. 

Later on we will choose a specific $u_0\in \mathcal{S}(\R^n)$ such that $\widehat u_0\ge0$ and $\widehat u_0$ is even (in a such way that $u$ is real-valued). All the approximate solutions
$u_l$ constructed from such datum $u_0$
satisfy $\widehat u_l(t,\cdot)\ge0$. The convergence of $(u_l)$ in the $C([0,T],L^1(\R^n))$-norm
implies that $\widehat u(t,\cdot)\ge0$ during the whole lifetime of the solution.

We introduce the following notation: for $b\in\mathbb{N}$, and a non-negative measurable function $f$, we denote
\begin{equation}
 f^{*b}=\underbrace{f*\ldots *f}_{b \text{ times}}.
\end{equation}

Let us now state a useful lemma.
\begin{lemma}
\label{lemme1}
Let  $\delta>0$, $b\in \mathbb{N}$ $(b\geq 2)$ and $w\in \mathcal{S}(\R^n)$,
such that $\widehat{w} \geq 0$.
Let $c_\delta=1-e^{-\frac{\delta}{2}(b^2 -1)}$.
Also assume that the support of
$\widehat{w}$ is contained in the ball $B(0,1)$. 
Let $w_k$,
$\alpha_k$ and $t_k$ be defined by the recursive relations $(k\ge1)$:
\[
\begin{cases}
w_0=w\\
w_k=w_{k-1}^b,
\end{cases}
\qquad
\begin{cases}
\alpha_0=1\\
\alpha_k=\alpha_{k-1}^b\,b^{-2k} \, c_\delta,
\end{cases}
\qquad
\begin{cases}
t_0=0\\
t_k = t_{k-1} +b^{-2k}\,\frac{\delta}{2}(b^{2}-1).
\end{cases}
\]
Then, if $u$ is the solution of \eqref{duhamel} with initial condition $u_0(x) \in L^{n(b-1)/2}(\mathbb{R}^{n})$, and if $\widehat{u}_0 (\xi) \geq A \widehat{w}(\xi)$ with $A > 0$, then, for any $k\in\N$, 
\begin{equation}
\label{rec:lem}
\widehat{u}(t,\xi) \geq A^{b^{k}} \alpha_k\,  e^{-b^{k}t} \mathds{1}_{t\geq t_k}  \ \widehat{w}_k(\xi), 
\end{equation}
where $\mathds{1}_{t\geq t_k}$ is the indicator function of the interval $[t_k,+\infty)$.
\end{lemma}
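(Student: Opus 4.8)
The plan is to prove the estimate \eqref{rec:lem} by induction on $k$, using the Duhamel formula \eqref{duhamel} on the Fourier side, where the heat semigroup acts as multiplication by $e^{-t|\xi|^2}$ and the nonlinearity $u^b$ becomes the $b$-fold convolution $\widehat u^{*b}$. The base case $k=0$ is just the hypothesis $\widehat u_0\ge A\widehat w=A^{b^0}\alpha_0\,\widehat w_0$, combined with the fact that all the approximate solutions $u_l$ (and hence $u$) have nonnegative Fourier transform, so that dropping the nonlinear term in Duhamel gives $\widehat u(t,\xi)\ge e^{-t|\xi|^2}\widehat u_0(\xi)\ge A\,e^{-t}\widehat w(\xi)$ on $\mathrm{supp}\,\widehat w\subset B(0,1)$, where $|\xi|^2\le1$. (Strictly, one first establishes the pointwise lower bound at the level of the $u_l$ and passes to the limit using the $C([0,T],L^1)$-convergence noted just before the lemma, which forces pointwise convergence of the Fourier transforms after passing to a subsequence.)

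For the inductive step, assume \eqref{rec:lem} holds for some $k$. I would use \eqref{duhamel} and keep only the nonlinear contribution and only the time interval $s\in[t_k,t]$ with $t\ge t_{k+1}$:
\[
\widehat u(t,\xi)\ \ge\ \int_{t_k}^{t} e^{-(t-s)|\xi|^2}\,\bigl(\widehat u(s,\cdot)^{*b}\bigr)(\xi)\,ds .
\]
On $\mathrm{supp}\,\widehat w_{k+1}=\mathrm{supp}\,\widehat w_k^{*b}$ one has $|\xi|\le b\cdot(\text{radius of }\mathrm{supp}\,\widehat w_k)$; since $\mathrm{supp}\,\widehat w_k\subset B(0,b^k)$ (the $b$-fold convolution at step $k$ has support in a ball of radius $b^k$, as $\mathrm{supp}\,\widehat w\subset B(0,1)$), we get $|\xi|\le b^{k+1}$, hence $e^{-(t-s)|\xi|^2}\ge e^{-b^{2k+2}(t-s)}$. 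Plugging the inductive bound $\widehat u(s,\cdot)\ge A^{b^k}\alpha_k e^{-b^k s}\widehat w_k$ into the convolution and using $\widehat w_k^{*b}=\widehat w_{k+1}$ (equivalently $w_{k+1}=w_k^b$) gives a factor $A^{b^{k+1}}\alpha_k^{b}e^{-b^{k+1}s}\widehat w_{k+1}(\xi)$ under the integral. The remaining scalar integral
\[
\int_{t_k}^{t} e^{-b^{2k+2}(t-s)}\,e^{-b^{k+1}s}\,ds
\]
must be bounded below by $b^{-2(k+1)}c_\delta\, e^{-b^{k+1}t}\,\mathds 1_{t\ge t_{k+1}}$; this is where the specific choices of $t_k$ and $c_\delta$ enter. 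One factors out $e^{-b^{k+1}t}$ and, after a change of variable, is left with estimating $\int_{t_k}^t e^{-b^{2k+2}(t-s)}e^{-b^{k+1}(s-t)}\,ds$ from below by $\approx b^{-2(k+1)}(1-e^{-(b^{2k+2}-b^{k+1})(t-t_k)})$, and for $t\ge t_{k+1}=t_k+b^{-2(k+1)}\frac\delta2(b^2-1)$ the exponent in the parenthesis is at least $\frac\delta2(b^2-1)$ (up to the harmless factor $b^{k+1}$ which only helps), so the parenthesis is $\ge c_\delta$. Collecting constants yields exactly $A^{b^{k+1}}\alpha_{k+1}e^{-b^{k+1}t}\mathds 1_{t\ge t_{k+1}}\widehat w_{k+1}(\xi)$, closing the induction.

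The main obstacle is the scalar integral estimate and the bookkeeping of the geometric-type recursions: one has to check that the lower bound on the time integral produces precisely the factor $b^{-2(k+1)}c_\delta$ encoded in the definition of $\alpha_{k+1}$, uniformly in $k$, and that the support radii double-check with the dyadic-in-$b$ growth $b^k$ so that the exponential damping $e^{-(t-s)|\xi|^2}$ never costs more than the $b^{-2(k+1)}$ already accounted for. A secondary technical point, which I would address once at the start, is justifying the pointwise Fourier inequalities rigorously: the solution $u$ is a limit of the $u_l$'s only in $X$ and in $C([0,T],L^1)$, so one argues the lower bound for each $u_l$ by the same induction (the Duhamel iteration manifestly preserves nonnegativity and monotonicity of the $\widehat u_l$) and then takes $l\to\infty$, extracting a subsequence converging pointwise a.e.\ on the Fourier side.
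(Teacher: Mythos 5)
Your proof is correct and follows essentially the same route as the paper: induction via the Fourier-side Duhamel formula, nonnegativity of $\widehat u$ inherited from the approximation scheme, the support bound $\mathrm{supp}\,\widehat w_k\subset B(0,b^k)$ to control $e^{-(t-s)|\xi|^2}$, and the scalar time-integral estimate that produces the factor $b^{-2(k+1)}c_\delta$. One small imprecision in your last step: the exponent $(b^{2k+2}-b^{k+1})(t_{k+1}-t_k)=(1-b^{-(k+1)})\frac{\delta}{2}(b^2-1)$ is actually slightly \emph{less} than $\frac{\delta}{2}(b^2-1)$, so the parenthesis alone is $<c_\delta$; the bound is nevertheless saved either by first discarding the factor $e^{b^{k+1}(t-s)}\ge 1$ (as the paper does, which leaves exactly $\int_{t_k}^t e^{-b^{2k+2}(t-s)}\,ds=b^{-2(k+1)}\bigl(1-e^{-b^{2k+2}(t-t_k)}\bigr)\ge b^{-2(k+1)}c_\delta$), or by observing that $x\mapsto(1-e^{-x\tau})/x$ is decreasing, so the true prefactor $(b^{2k+2}-b^{k+1})^{-1}$ compensates.
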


\begin{proof}
Using Fourier  transform, we have that \eqref{duhamel} becomes 
\begin{eqnarray}
\widehat{u}(\xi, t) = e^{-t  \left | \xi \right |^{2}} \widehat{u}_0(\xi) + \displaystyle\int_{0}^{t} e^{(s-t) \left | \xi \right |^{2}} 
[\widehat{u}(s,\xi)]^{*b}  \, ds.
\end{eqnarray}
We start with  the case $k=0$.
We have $\widehat u(t,\cdot)\ge0$, because $\widehat{u}_0(\xi) \geq A \widehat{w}(\xi) \ge 0$, as observed
at the beginning of this section. 
Then, using that $\mbox{supp\,}\widehat{w} \ \subset \{ \left | \xi \right | \leq 1\}$, we get
\begin{equation}
\begin{split}
\label{rec}
\widehat{u}(\xi, t) 
&\geq e^{-t  \left | \xi \right |^{2}} \widehat{u}_0(\xi) \geq e^{-t \left | \xi \right |^{2}} A \widehat{w}(\xi)\geq A   \ e^{-t }  \widehat{w}(\xi)\\
&=A\,\alpha_0\, e^{-t}\,\widehat w_0(\xi),
\qquad \forall  t \ge0.
\end{split}
\end{equation}
This agrees with~\eqref{rec:lem} for $k=0$.
Suppose now that inequality~\eqref{rec:lem} holds for $k-1$. Then we get, for all $t\geq t_k$:  
\begin{equation*}
\begin{split}
\widehat{u}(\xi, t)
&\geq \displaystyle\int_{0}^{t} e^{(s-t) \left | \xi \right |^{2}} [\widehat{u}(s,\xi)]^{*b} \, ds\\
&\geq \displaystyle\int_{t_{k-1}}^{t} e^{(s-t) \left | \xi \right |^{2}} (A^{b^{k-1}} \alpha_{k-1})^{b}
e^{-b^k s} \  [\widehat{w}_{k-1}(\xi)]^{*b}   \, ds.\\
&\geq 
  A^{b^{k}} \alpha_{k-1}^{b} \widehat{w}_k (\xi)  \displaystyle\int_{t_{k-1}}^{t} e^{(s-t) \left | \xi \right |^{2}} e^{-b^ks}  \,ds\\
&\geq 
 A^{b^{k}} \alpha_{k-1}^{b} \widehat{w}_k (\xi) e^{-b_k t} \displaystyle\int_{t_{k-1}}^{t} e^{(s-t)b^{2k}}\,ds,
\end{split}
\end{equation*}
where in the last inequality we used that $\mbox{supp\,}w_k\subset \{|\xi|\le b^{k}\}$.

But, for $t\ge t_k$, we have
\[
\begin{split}
\displaystyle\int_{t_{k-1}}^{t} e^{(s-t)b^{2k}}\,ds
 &=b^{-2k}(1-e^{-b^{2k}(t-t_{k-1})})\\
 &\ge b^{-2k}c_\delta,
\end{split}
\]
because $t_k - t_{k-1} =b^{-2k}\,\frac{\delta}{2}(b^{2}-1)$, 
and so $1- e^{-b^{2k}(t_k-t_{k-1})}= \ c_\delta$.

Hence we get,
\begin{equation*}
\begin{split}
\widehat{u}(\xi, t)
&\ge A^{b^{k}} \alpha_{k-1}^b \,b^{-2k} c_\delta\, e^{-b_kt} \mathds{1}_{t\ge t_k} \widehat{w_k}(\xi)\\
&\ge A^{b^{k}} \alpha_{k}\, e^{-b_kt} \mathds{1}_{t\ge t_k} \widehat{w_k}(\xi),
\end{split}
\end{equation*}
by the recursive relation defining $\alpha_k$.
Our claim now follows by induction.
\end{proof}

For later use, let us observe that closed form for the sequences introduced in the previous lemma
$w_k$, $\alpha_k$ and $t_k$, are
\[
w_k=w^{b^k} \qquad (k\ge0),
\]
next
\[
\alpha_k= b^{-\frac{2b}{(b-1)^2}b^k+\frac{2}{b-1}k+\frac{2b}{(b-1)^2}}\,
c_{\delta}^{\frac{b^{k}-1}{b-1}}\qquad (k\ge0),
\]
and
\[
t_k = \frac{\delta}{2}(b^2 -1 )  \,\sum_{j=1}^k b^{-2j} \qquad (k\ge1)
\]
as it is easily checked.

Next lemma provides a first blowup result for equation \eqref{duhamel}. 

\begin{lemma}
\label{lem:blo1}
Let  $\delta>0$ and $w\in\mathcal{S}(\mathbb{R}^{n})$ $(w\neq 0)$ be a  Schwartz  function such that $\widehat{w} \geq 0$. Also assume that the support of
$\widehat{w}$ is contained in the ball $B(0,1)$.
Let $u_0\geq Aw$, with 
$A\geq b^{2b/(b-1)^2} c_{\delta}^{-1/(b-1)}e^{\delta/2}\| \widehat{w} \|_{1}^{-1}$. 
If  $u$ is the classical solution of \eqref{duhamel} arising from $u_0$ and belonging to $C([0,T^{*}],L^{n(b-1)/2}(\mathbb{R}^{n}))$, then $T^{*}\leq \frac{\delta}{2}$.
\end{lemma}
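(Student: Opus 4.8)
The plan is to derive a contradiction from the assumption $T^*>\delta/2$ by exploiting Lemma~\ref{lemme1} together with the closed-form expressions for $\alpha_k$ and $t_k$ recorded after its proof. First I would note that $t_k = \tfrac{\delta}{2}(b^2-1)\sum_{j=1}^k b^{-2j}$ increases to the limit $t_\infty = \tfrac{\delta}{2}(b^2-1)\cdot\tfrac{b^{-2}}{1-b^{-2}} = \tfrac{\delta}{2}$. Hence if the solution exists on $[0,T^*]$ with $T^*>\delta/2$, then for every $k$ we have $t_k<\delta/2\le T^*$, so we may evaluate the bound \eqref{rec:lem} at a fixed time $t=\delta/2$ (or any $t\in(t_\infty,T^*]$), obtaining
\[
\widehat u(\delta/2,\xi)\ \ge\ A^{b^k}\,\alpha_k\,e^{-b^k\delta/2}\,\widehat w_k(\xi)\qquad\text{for all }k\ge1,\ \xi\in\R^n.
\]

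Next I would integrate this inequality in $\xi$ to bound $u(\delta/2,0)=(2\pi)^{-n}\int\widehat u(\delta/2,\xi)\dd\xi$ (up to normalization constants, which I'll suppress) from below. Since $\widehat w_k = \widehat w^{\,b^k}$ is the Fourier transform of $w_k=w^{b^k}$ is $\widehat w$ convolved with itself $b^k$ times, one has $\int \widehat w_k(\xi)\dd\xi = \int (\widehat w)^{*b^k}(\xi)\dd\xi = \bigl(\int\widehat w(\xi)\dd\xi\bigr)^{b^k} = \|\widehat w\|_1^{b^k}$, using $\widehat w\ge0$. Therefore
\[
u(\delta/2,0)\ \gtrsim\ A^{b^k}\,\alpha_k\,e^{-b^k\delta/2}\,\|\widehat w\|_1^{b^k}.
\]
Now I substitute the closed form $\alpha_k = b^{-\frac{2b}{(b-1)^2}b^k+\frac{2}{b-1}k+\frac{2b}{(b-1)^2}}\,c_\delta^{\frac{b^k-1}{b-1}}$ and group the terms that grow like $b^k$ in the exponent. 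The right-hand side is, up to a bounded-in-$k$ factor $b^{\frac{2}{b-1}k+\frac{2b}{(b-1)^2}}c_\delta^{-1/(b-1)}$, equal to
\[
\Bigl(A\,\|\widehat w\|_1\,b^{-\frac{2b}{(b-1)^2}}\,c_\delta^{\frac{1}{b-1}}\,e^{-\delta/2}\Bigr)^{b^k}.
\]
The hypothesis on $A$ is precisely that $A\,\|\widehat w\|_1\,b^{-2b/(b-1)^2}\,c_\delta^{1/(b-1)}\,e^{-\delta/2}\ge 1$, so the base of this power is $\ge1$, and thus the right-hand side is bounded below by $b^{\frac{2}{b-1}k}c_\delta^{-1/(b-1)}b^{2b/(b-1)^2}$, which tends to $+\infty$ as $k\to\infty$. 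This forces $u(\delta/2,0)=+\infty$, contradicting $u\in C([0,T^*],L^{n(b-1)/2})$ and $u(\delta/2,\cdot)\in L^1$ (the latter being the $L^1$-persistence established at the start of Section~\ref{sec:proof} for Schwartz, or $L^1\cap L^{n(b-1)/2}$, data — one should check $w\in\mathcal S$ indeed gives $u_0\ge Aw$ the needed integrability, or work directly with the assumption that forces $\widehat u(\delta/2,\cdot)$ to be a genuine function). Hence $T^*\le\delta/2$.

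The main obstacle, I expect, is the bookkeeping ensuring that the quantity $u(\delta/2,0)$ is actually controlled by the norm in which the solution lives — i.e.\ justifying that $\widehat u(\delta/2,\cdot)\in L^1$ so that the pointwise lower bounds \eqref{rec:lem} can be integrated to contradict finiteness. This is exactly what the opening discussion of Section~\ref{sec:proof} provides: for $u_0\in L^1\cap L^{n(b-1)/2}$ (in particular for the Schwartz-class data that will be chosen, or for any $u_0\ge Aw$ with $w\in\mathcal S$ and the extra integrability coming from the construction) the solution satisfies $u\in C([0,T^*),L^1(\R^n))$, hence $\|\widehat u(\delta/2,\cdot)\|_\infty\le\|u(\delta/2,\cdot)\|_1<\infty$; integrating \eqref{rec:lem} in $\xi$ against the fixed finite measure would then immediately be inconsistent with the divergence above. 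A secondary technical point is to make sure Lemma~\ref{lemme1} genuinely applies with $A>0$ as given (it does, since the stated lower bound on $A$ is positive) and that the closed forms for $\alpha_k,t_k$ are used correctly; these are routine. Everything else is the elementary limit computation $b^{-2b/(b-1)^2}c_\delta^{1/(b-1)}e^{-\delta/2}\cdot A\|\widehat w\|_1\ge1\Rightarrow(\cdot)^{b^k}\to\infty$ carried out above.
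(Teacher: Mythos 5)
Your argument is the paper's argument: assume $T^*>\delta/2$, apply Lemma~\ref{lemme1} at $t=\delta/2$ (using $t_k\uparrow\delta/2$), compute $\|\widehat w_k\|_1=\|\widehat w\|_1^{b^k}$ by Tonelli, insert the closed form of $\alpha_k$, and observe that the hypothesis on $A$ makes the base of the $(\cdot)^{b^k}$ power at least $1$, so that $\|\widehat u(\delta/2,\cdot)\|_1=+\infty$. All of that is correct and matches the paper step for step.

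The one place you go astray is the closing contradiction. You propose to contradict $\|\widehat u(\delta/2,\cdot)\|_\infty\le\|u(\delta/2,\cdot)\|_1<\infty$ (via $L^1$-persistence), but what you have shown to diverge is $\|\widehat u(\delta/2,\cdot)\|_1$, not $\|\widehat u(\delta/2,\cdot)\|_\infty$; a bounded nonnegative function can have infinite integral, so this is a non sequitur. (Indeed, in the borderline case where the base equals $1$, the pointwise sup of the lower bound $A^{b^k}\alpha_k e^{-b^k\delta/2}\widehat w_k$ need not diverge, since $\widehat w_k$ spreads over a ball of radius $b^k$.) Likewise, ``$u(\delta/2,0)=+\infty$'' does not by itself contradict membership in $L^{n(b-1)/2}$ or $L^1$. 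The correct contradiction — and the one the paper uses — is with the $L^\infty$ bound at positive times supplied by Theorem~\ref{localexis} ($u\in L^\infty_{\rm loc}((0,T^*],L^\infty)$): since $\widehat u(\delta/2,\cdot)\ge0$, one has $\|u(\delta/2,\cdot)\|_{L^\infty}\ge(2\pi)^{-n}\|\widehat u(\delta/2,\cdot)\|_{L^1}$, so the divergence of the right-hand side contradicts $T^*>\delta/2$. With that single substitution in the last step, your proof is complete.
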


\begin{proof}
Assume, by contradiction, $T^{*}>\frac{\delta}{2}$.
Applying Lemma \ref{lemme1}, and using that  $t_k \uparrow \frac{\delta}{2}$ as $k\rightarrow+\infty$, we get, for $t=\delta/2$ and all $k\in\N$,
\begin{align*}
\|\widehat u (\delta/2,\cdot)\|_{1}
&\geq  A^{b^{k}} \alpha_k e^{-b^{k}\delta/2} \| \widehat{w}_k \|_{1} \\
&= A^{b^{k}} \alpha_k e^{-b^{k}\delta/2} \| \widehat{w}\|_{1}^{b^k},
\end{align*}
by Tonelli's theorem and the non-negativity of $\widehat w$.
The size condition on $A$ ensures that, taking $\sup_{k\in\N}$ in the right-hand side,
one gets $\|\widehat u(\delta/2,\cdot)\|_1=+\infty$.
But by the positivity of $\widehat{u}(t,\cdot)$ and Fourier inversion formula,
\[
\|u(\delta/2,\cdot)\|_{L^\infty}
\ge (2\pi)^{-n}\|\widehat u (\delta/2,\cdot)\|_{L^1}=+\infty.
\]
This contradicts the fact that the lifetime of Weissler solution satisfies $T^*>\delta/2$.
\end{proof}

\begin{proof}[Proof of Theorem \ref{inflation}]
Let $\delta>0$ fixed and $w \in\mathcal{S}(\mathbb{R}^{n})$ such that $\widehat{w}\neq0$ and $\widehat{w}\geq 0$.
We also assume that $\widehat{w}$ is an even function and its support is contained in the ball
$B(0,{\frac{1}{2b}})$.
Let also $u_{0,N}\in \mathcal{S}(\mathbb{R}^{n})$ be defined as  
\begin{equation}
\label{constru}
u_{0,N} (x) = 
\epsilon_{N}  \displaystyle\sum_{k=0}^N   2^{2k/b} \eta_k 
\cos (\textstyle\frac32 2^{k} x_1)\, w(x)
\qquad (N\in\N),
\end{equation}
where the sequences $(\eta_k)$ and $(\epsilon_N)$ are chosen in the following way:
\begin{equation}
\label{choe}
\eta_k=1/(1+k)^{1/b},\qquad 
\epsilon_N=1/\log(\log(3+N)).
\end{equation}
In fact, the only thing that does matter in what follows
are the following properties of $(\eta_k)$ and $(\epsilon_N)$:
they must be nonnegative and such that
$(\eta_k)\in \ell^q$ if and only if $q>b$, and $\epsilon_N\to0$, with $\epsilon_{\!N}^b\sum_{k=0}^N\eta_k^b\to+\infty$ as $N\to+\infty$.
In the case $b\ge3$ is odd
we will additionally need 
$\epsilon_{\!N}^b\sum_{k=0}^{N-1} \eta_k^{b-1}\eta_{k+1}\to+\infty$
The choice~\eqref{choe} does satisfy these requirements.

Observe that the Fourier transform of $\cos(\frac32 2^{k}x_1) w(x)$ is 
$\frac{1}{2}[\widehat w(\xi+\frac32\,2^ke_1)+\widehat w(\xi-\frac32\,2^ke_1)]$, which is contained in the union of two balls centered at $\pm \frac32\,2^ke_1$ and radius $1/(2b)$ (and hence in a single dyadic annulus). 
Let us consider the homogeneous Littlewood--Paley decomposition, 
$\sum_{\in \Z} \widehat \psi(2^{-j}\xi)=1$, for $\xi\not=0$, obtained using a radial function $\widehat\psi\in C^\infty_0(\R^n)$ which is supported in $\{\frac34\le|\xi|\le \frac83\}$ and constant equal to 1 in $\{\frac54\le|\xi|\le\frac74\}$.
If we denote by $\Delta_j f=\psi_j*f$ the Littlewood--Paley dyadic blocks,
then we see that
\[
\Delta_j \cos (\textstyle\frac322^{k} x_1)\, w(x)=0,\qquad \text{if $k\not=j$},
\]
and 
\[
\Delta_j \cos (\textstyle\frac322^{j} x_1)\, w(x)=\cos (\textstyle\frac32 2^{j} x_1)\, w(x).
\]
Thus, if $q\geq 1$ we get 
\begin{align*}
\left \| u_{0,N} \right \|_{\dot B^{-2/b}_{n(b-1) b/2,q}} 
&=   \epsilon_{N} \left( \sum_{j\in\mathbb{Z}}^{} 2^{-\frac{2}{b}q j} \left \| \Delta_j \ u_{0,N} \right \|^{q}_{n(b-1)b/2}  \right)^{\frac{1}{q}} \\
&=
  \epsilon_{N} \left( \sum_{j=0}^{N} 
   \eta_j^q \| w(x) \cos (\textstyle\frac32 2^{j} x_1)\|^{q}_{n(b-1) b/2} \right)^{\frac{1}{q}} \\
&\le
  \epsilon_{N} \left( \sum_{j=0}^{N}  \eta_j^q \| w \|^{q}_{n(b-1) b/2} \right)^{\frac{1}{q}}\\
& = \epsilon_{N} \left(\sum_{j=0}^{N}  \eta_j^q\right)^{1/q} \left \| w \right \|_{n(b-1)b/2}.
\end{align*}
Thus, for any fixed $\delta>0$ and $q> b$, we can find  $N_0 \in \mathbb{N}$ such that 
\begin{align}
\label{small-be}
\left \| u_{0,N_0} \right \|_{\dot{B}^{-2/b}_{n(b-1)b/2,q}}<\delta.
 \end{align}

Now, let $T_N^*$ be the maximal time of the solution obtained in Theorem~\ref{localexis}, arising
from $u_{0,N}(x)$. We denote $u_N$ this solution. We are going to prove that $T^*_N<+\infty$, and more precisely that $T^*_N<\delta$.

If $N\ge N_0$ and $T_{N}^{*}<\delta$ then there is nothing to prove. 
We thus pick $N\ge N_0$ and assume $T^{*}_{N}\geq \delta$. 
By the remark at the beginning of Section~\ref{sec:proof}, we have  $\widehat{u}_{N}(t,\xi)  \geq 0$ for all $t\in [0, T_{N}^{*})$. 
Thus, if $ 0<t < T_{N}^{*}$, we get

 \begin{align*}
\widehat{u}_{N} (t,\xi) 
&= e^{-t \left | \xi \right |^{2} } \widehat{u}_{0,N} (\xi)+\displaystyle\int_{0}^{t}  
   e^{-(t-s)\left | \xi \right |^{2}}  [\widehat{u}_{N}(s,\xi)]^{*b}  \, ds\\
& \geq  e^{-t \left | \xi \right |^{2} } \widehat{u}_{0,N} (\xi)
\end{align*}
We have
\[
\widehat u_{0,N}(\xi)=\epsilon_N\sum_{k=0}^N 2^{2k/b}\eta_k \,
\textstyle\frac12[\widehat w(\xi+\frac32 2^ke_1) +\widehat w(\xi-\frac32 2^ke_1)].
\]
Hence, using that the support of $\widehat w$ is contained in $\{|\xi|\le \frac14\}$,
\begin{align*}
\widehat{u}_{N} (t,\xi) 
&\ge \epsilon_N \sum_{k=0}^N \biggl(
  \underbrace{e^{-t\,2^{2k+2}}2^{2k/b}\eta_k\textstyle\frac12 \widehat w(\xi+\frac32 2^ke_1) }_{
  =A_k(t,    \xi)}
  +
   \underbrace{e^{-t\,2^{2k+2}}2^{2k/b}\eta_k\textstyle\frac12 \widehat w(\xi-\frac32 2^ke_1) }_{
  =B_k(t,    \xi)}
  \biggr).
\end{align*}
This implies that
\begin{align}
\label{disti}
(\widehat u_N)^{*b}(t,\xi)
&\ge \epsilon_{\!N}^b\sum_{k_1=0}^N\cdots\sum_{k_b=0}^N \biggl( (A_{k_1}+B_{k_1})*\cdots*(A_{k_b}+B_{k_b})(t,\xi)\biggr).
\end{align}
It is now convenient to distinguish two cases.

\paragraph{\bf The case $b$ even}
We bound from below~\eqref{disti} retaining just a few terms of the above summation:
\begin{align*}
(\widehat u_N)^{*b}(t,\xi) 
&\ge\epsilon_{\!N}^b \sum_{k=0}^N (A_k*B_k)^{*b/2}(t,\xi).
\end{align*}
But,
\[
\textstyle\widehat w(\cdot+\frac32 2^ke_1)*\widehat w(\cdot-\frac32 2^ke_1)
=\widehat w*\widehat w,
\]
hence,
\begin{align*}
\label{disti}
(\widehat u_N)^{*b}(t,\xi)
&\ge \epsilon_{\!N}^b\sum_{k=0}^N e^{-b\,t\,2^{2k+2}}2^{2k-b}\,\eta_k^b(\widehat w)^{*b}(\xi).
\end{align*}
Using that $\text{supp}(\widehat w^{*b})\subset B(0,1)$, we deduce that
\begin{align*}
\widehat u_N(t,\xi)
&\ge \int_0^t e^{(s-t)|\xi|^2}\widehat u_N(s,\cdot)^{*b}\,ds\\
&\ge\epsilon_{\!N}^b \sum_{k=0}^N 2^{2k-b}\,\eta_k^b
 \Bigl(\int_0^t e^{(s-t)-b\,s\,2^{2k+2}}\,ds\Bigr)(\widehat w)^{*b}(\xi)\\
&\ge \epsilon_{\!N}^b\sum_{k=0}^N \frac{2^{-b}\eta_k^b}{4b}\, 
 \Bigl(1-e^{-t(b\,2^{2k+2}-1)}\Bigr) e^{-t}(\widehat w)^{*b}(\xi) \\
&\ge \epsilon_{\!N}^b\Bigl(\sum_{k=0}^N \eta_k^b\Bigr)\frac{2^{-b}}{4b}\, 
 \bigl(1-e^{-t(4b-1)}\bigr) e^{-t}(\widehat w)^{*b}(\xi)  .
\end{align*}

Our choice of $(\eta_k)$ and $(\epsilon_k)$ ensure that
$\epsilon_{\!N}^b(\sum_{k=0}^N \eta_k^b)\to+\infty$ as $N\to+\infty$.
Now let us take $t=\delta/2$ and $N\ge N_0$ large enough in a such way that 
\[
\epsilon_{\!N}^b\Bigl(\sum_{k=0}^N \eta_k^b\Bigr)\frac{2^{-b}}{4b}\, 
 \bigl(1-e^{-\delta(4b-1)/2}\bigr)e^{-\delta/2}
 \ge 
 \frac{b^{2b/(b-1)^2} e^{\delta/2}}{c_{\delta}^{1/(b-1)}\| \widehat{w} \|_1^b}.
\]
Hence~Lemma~\ref{lem:blo1} applies and implies that
the lifetime of the solution
of $u_t=\Delta u + u^b$ arising from the initial datum $u_N(\delta/2,\cdot)$ must blow up before the time $\delta/2$.
By the uniqueness result of Theorem~\ref{localexis}, this implies that $T^*_N<\delta$.

\paragraph{\textbf{The case $b$ odd}}
In this case we can write $b=2m+3$, with $m\in\N$.
Going back to~\eqref{disti}, we bound this expression from below in the following way:
\begin{align*}
(\widehat u_N)^{*b}(t,\xi)
&\ge \epsilon_{\!N}^b\sum_{k=0}^{N-1} \Bigl((A_k*B_k)^{*m}*A_{k+1}*B_k*B_k\Bigr)(t,\xi).
\end{align*}
By the invariance of convolution products under translation,
$\text{supp}{(A_{k}*B_k)}$ is contained in $\{|\xi|\le 1/b\}$ and $\text{supp}{(A_{k+1}*B_k*B_k)}$ is contained in $\{|\xi|\le 3/(2b)\}$. Hence,
\[
\text{supp} \Bigl((A_k*B_k)^{*m}*A_{k+1}*B_k*B_k\Bigr)(t,\cdot)\subset
\{|\xi|\le 1/2\}.
\]
Hence,
\begin{align*}
\label{disti}
 (\widehat u_N)^{*b}(t,\xi)
&\ge \epsilon_{\!N}^b\sum_{k=0}^{N-1} e^{-(m+3)\,t\,2^{2k+3}}2^{2k-b}\,\eta_k^{2m+2}\eta_{k+1}\,\textstyle \widehat w^{*b}(\xi).
\end{align*}
Arguing as before, we obtain,
\begin{align*}
\widehat u_N(t,\xi)
&\ge \int_0^t e^{(s-t)|\xi|^2}(\widehat u_N)^{*b}(s,\cdot)\,ds\\
&\ge \epsilon_{\!N}^b\sum_{k=0}^{N-1} 2^{2k-b}\,\eta_k^{2m+2}\eta_{k+1}
 \Bigl(\int_0^t e^{(s-t)-(m+3)\,s\,2^{2k+3}}\,ds\Bigr)\widehat w^{*b}(\xi)\\
&\ge \epsilon_{\!N}^b\Bigl(\sum_{k=0}^{N-1} \eta_k^{b-1}\eta_{k+1}\Bigr)\frac{2^{-b}}{8(m+3)}\, 
 \bigl(1-e^{-t(8(m+3)-1)}\bigr) e^{-t}\widehat w^{*b}(\xi)  .
\end{align*}
But $\epsilon_{\!N}^b\sum_{k=0}^{N-1} \eta_k^{b-1}\eta_{k+1}\to+\infty$ as $N\to+\infty$ and therefore we can conclude taking $t=\delta/2$ and applying Lemma~\ref{lem:blo1}, exactly as in the case
$b$ even.
\end{proof}

\section{Conclusions}
The global-in-time solvability of the Cauchy problem for the nonlinear heat equation~\eqref{class} in $\R^n$ is usually obtained putting a smallness assumption on a suitable scale invariant norm of the initial data. However, in the present paper we proved that the scale-invariant norm of the Besov space $\dot{B}^{-2/b}_{nb(b-1)/2,q}$ is not suitable for this purpose: in fact, arbitrarily small
initial data in this space (or in any larger scale invariant space) can give rise to solutions that blow up in finite time.
While our method provides a few quantitative estimates on the solution, it gives little information on the nature of the blowup. The issue of the type of blowup has been thoroughly investigated, e.g., in
\cites{ColMR17, HerV94,MatM09}. 

Our result is sharp, in the sense that, for all $s>-2/b$, a smallness condition on the 
$\dot{B}^{s}_{p,q}$-norm of $u_0$ (with $s-n/p=-2/b-2/(b(b-1))$, to respect the scale invariance), which is
slightly more stringent, does ensure that the solution is globally defined.
On the other hand, the precise role of the third index~$q$ (that does not affect the scaling of the Besov norm) on this blowup issue is less clear:
as the proof of Theorem~\ref{inflation} requires $q>b$, and breaks down when $q=b$, the following open problem naturally arises. (Here $b$ does not need to be an integer):
\emph{Let $n(b-1)/2>1$ and $u_0$ a smooth and well decaying initial data as $|x|\to+\infty$; does the smallness assumption $\|u_0\|_{\dot{B}^{-2/b}_{nb(b-1)/2,b}}<\!\!\!<1$
imply that the solution of the Cauchy problem for $u_t=\Delta u+ |u|^{b-1}u$ in $\R^n$
is global in time~?}

\section*{Acknowledgement}
The authors are grateful to Paul Acevedo and to the Referee for their useful comments and suggestions.

Fernando Cortez was supported by Escuela Politécnica Nacional, Proyecto PII-DM-08-2016.

\end{document}